\newcommand{\ex}{\mathcal{E}} 
\newcommand{\T}{\tau}  
\newcommand{\timess}{\bar \times}
\newcommand{\E}{\textbf{E}}
\newcommand{\R}{\mathbb{R}}
\newcommand{\exter}{\mathcal{E}}
\newcommand{\N}{\mathbb{N}}
\newcommand{\Nmed}{\mbox{{\scriptsize $\N$}}}
\newcommand{\SN}{\mathfrak{S}}
\newcommand{\SB}{\mathfrak{S}_{\mbox{{\tiny $\!\mathcal{B}$}}}}
\newcommand{\DN}{\mathfrak{D}}
\newcommand{\piB}{\pi^{\mbox{{\tiny $\mathcal{B}$}}}}
\newtheorem{theorem}{Theorem}[section]
\newtheorem{corollary}[theorem]{Corollary}
 \newtheorem{lemma}[theorem]{Lemma}
 \newtheorem{proposition}[theorem]{Proposition}
 \theoremstyle{definition}
 \newtheorem{definition}[theorem]{Definition}
 \theoremstyle{remark}
 \newtheorem{remark}[theorem]{Remark}
 \numberwithin{equation}{subsection}
\newcommand{\PP}{\textbf{P}}
\begin{document}

\title{Brown representability for exterior cohomology and cohomology
with compact supports\footnotetext{ This work has been supported by
the Ministerio de Educaci\'on y Ciencia grants MTM2009-12081 and
MTM2010-18089, and by the Junta de Andaluc\'{\i}a grant FQM-213.\vskip 4pt
2010 \textit{Mathematics Subject Classification}: 55P57, 55N99.
\vskip 4pt
\textit{Keywords}: Proper homotopy, cohomology with
compact supports, exterior space, exterior cohomology, Brown
representability, Brown-Grossman homotopy groups.}}

\author{J. M. Garc\'{\i}a-Calcines, P. R. Garc\'{\i}a-D\'{\i}az \\
and A. Murillo}

\maketitle

\begin{abstract}
It is well known that cohomology with compact supports is not a
homotopy invariant but only a proper homotopy one. However, as the
proper category lacks of general categorical properties, a Brown
representability theorem type does not seem reachable. However, by
proving such a theorem for the so called exterior cohomology in
the complete and cocomplete exterior category, we show that the
$n$-th cohomology with compact supports of a given countable,
locally finite, finite dimensional relative CW-complex $(X,\R_+)$
is naturally identified with the set $[X,K_n]^{\R_+}$ of exterior
based homotopy classes from a ``classifying space'' $K_n$. We also
show that this space has the exterior homotopy type of the
exterior Eilenberg-MacLane space for Brown-Grossman homotopy
groups of type $(R^\infty,n)$, $R$ being the fixed coefficient
ring.
\end{abstract}

\section*{Introduction}
Proper homotopy theory is designed to study  non compact
topological spaces modulo deformations which respect the behaviour
of these spaces at infinity. In this paper we are concerned with
the classification of cohomology invariants of proper homotopy.
Among them, classical cohomology with compact supports and locally
finite cohomology \cite[\S3]{huguesranicki} are specially well
adapted functors to study  locally compact spaces up to proper
homotopy.

In classical homotopy theory, the representation, in the Brown
sense, of cohomology  functors in terms of homotopy classes of
maps coming from a universal space was the starting point of many
important developments. Proper homotopy theory would also very
much benefit from this classification. However, due to the lack of
categorical properties of the proper category $\mathbf{P}$, the
behaviour of classical homotopy invariants are not easy, when
possible, to translate to the proper  setting. This applies to the
mentioned problem: first of all, the above cohomology theories are
not homotopy invariants but only proper homotopy invariants, so
classical Brown representability cannot be attained within this
category. Indeed, none of the various representability theorems
available applies as $\mathbf{P}$ is not closed even for finite
limits or colimits.   Recall that the proper category is only a
cofibration category \cite{Ay-D-Q} and that the most general forms
of Brown representability theorem, see for instance
\cite{jardine,neeman,rosicky}, requires the category to which it
applies to be, among other conditions, cocomplete.

We overcome these obstructions via the following procedure:
consider the full embedding of $\mathbf{P}$ into the so called
{\em exterior category} $\mathbf{E}$ (see next section for precise
definition and details), a complete and cocomplete category in
which exterior homotopy theory can be developed \cite{Ext_1}. We
can also consider the {\em exterior cohomology} in $\mathbf{E}$,
an extension of compactly supported cohomology to the exterior
category. This already appears in \cite[\S2]{spanier4} under a
different approach, and it has recently been used to obtain
interesting results in proper homotopy theory
\cite{sevillanos1,sevillanos2,garcalpari}. Finally, we show that
the exterior cohomology satisfies all the necessary properties to
be classified in the Brown sense.

As a result of this procedure we prove the following
representability theorem for the exterior cohomology of the, so
called, exterior CW-complexes, (see next sections for precise
definitions and notation).

\begin{theorem}\label{intro}
For each $n\geq 0$, there exist an  e-path connected, exterior
CW-complex $K_n\in \mathbf{Ho CW}^{\mathbb{R}_+}$, unique up to
based exterior homotopy, and a universal element $u\in
H_\exter^n(K_n)$, such that the natural natural transformation
$$T_u:[-,K_n]^{\mathbb{R}_+}\longrightarrow H_\exter ^n(-),\qquad T_u[f]=H_\exter^n(f)(u),$$
induces a bijection $ [X,K_n]^{\mathbb{R}_+}\cong H_\exter^*(X) $
for any exterior CW-complex $X\in \mathbf{Ho CW}^{\mathbb{R}_+}.$

\end{theorem}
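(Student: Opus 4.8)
The plan is to prove the theorem by recognizing $H_\exter^n(-)$ as a Brown functor on the exterior homotopy category of exterior CW-complexes and then running Brown's inductive construction, which is now legitimate precisely because $\E$ has been shown to be complete and cocomplete. Concretely, I would view $H_\exter^n(-)$ as a contravariant functor on $\mathbf{Ho CW}^{\R_+}$, valued in sets (in fact in abelian groups), and verify the two Brown axioms: the additivity (wedge) axiom and the Mayer--Vietoris (gluing) axiom. Homotopy invariance requires no separate argument, since $H_\exter^n$ is defined on and factors through the exterior homotopy category. Once the axioms hold, one may either invoke an abstract representability theorem of the type cited (valid for cocomplete categories) or, to obtain the explicit $K_n$ and its universal class $u$, carry out the classical constructive argument adapted to the exterior setting; I favor the latter since it directly yields the data demanded by the statement.

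First I would verify the wedge axiom: for any family $\{X_\alpha\}$ of exterior CW-complexes the inclusions should induce an isomorphism $H_\exter^n\big(\bigvee_\alpha X_\alpha\big)\cong \prod_\alpha H_\exter^n(X_\alpha)$. This reduces to the behaviour of exterior cohomology on exterior coproducts and follows from the cocompleteness of $\E$ together with the explicit description of $H_\exter^n$. Next I would establish the Mayer--Vietoris axiom: given a decomposition $X=A\cup B$ of an exterior CW-complex into exterior subcomplexes and classes $a\in H_\exter^n(A)$, $b\in H_\exter^n(B)$ agreeing on $A\cap B$, there must exist a class on $X$ restricting to each. This is the exterior analogue of the classical gluing axiom and should follow from the long exact cohomology sequence of an exterior cofibration, using the exactness properties of exterior cohomology recorded earlier.

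With both axioms in place I would construct $K_n$ via the Brown telescope, adapted to $\E$. Beginning with a wedge of exterior spheres representing generators of $H_\exter^n$, I would inductively build $K^{(1)}\hookrightarrow K^{(2)}\hookrightarrow\cdots$, at each stage wedging on exterior spheres to render $T_u$ surjective and attaching exterior cells to identify pairs of classes that $T_u$ fails to separate; the compatible family $u_i\in H_\exter^n(K^{(i)})$ then assembles, through the wedge axiom and exactness, into a universal element $u\in H_\exter^n(K_n)$ on the colimit $K_n=\varinjlim_i K^{(i)}$. That $T_u$ is a bijection on every exterior CW-complex $X$ follows by testing against exterior spheres and applying the Mayer--Vietoris axiom, exactly as in the compact case. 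I expect the main obstacle to be the convergence together with the basepoint bookkeeping: one must check that every cell attachment respects the exterior structure, that is, the behaviour at $\R_+$, and that on the restricted class of countable, locally finite, finite-dimensional complexes $(X,\R_+)$ the passage to the colimit introduces no $\varprojlim^1$ obstruction. It is here that the completeness of $\E$ and the standing hypotheses on $(X,\R_+)$ are essential.

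Finally, to identify $K_n$ with the exterior Eilenberg--MacLane space for the Brown--Grossman homotopy groups of type $(R^\infty,n)$, I would compute the exterior homotopy of $K_n$ by evaluating $T_u$ on exterior spheres: representability gives $[\Srara^q,K_n]^{\R_+}\cong H_\exter^n(\Srara^q)$, and the right-hand side vanishes except when $q=n$, where it yields $R^\infty$. Hence the Brown--Grossman groups of $K_n$ are concentrated in degree $n$ with value $R^\infty$, as claimed, and uniqueness up to based exterior homotopy follows from the uniqueness clause of the Brown construction.
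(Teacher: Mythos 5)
Your overall skeleton --- verify the wedge and Mayer--Vietoris axioms for $H^n_\exter$ on $\mathbf{Ho CW}^{\mathbb{R}_+}$ and then invoke (or rerun) Brown's construction --- is the same as the paper's. The genuine gap is in how you propose to verify those two axioms; as stated, neither justification goes through. For the wedge axiom, ``cocompleteness of $\E$ together with the explicit description of $H^n_\exter$'' is not enough: from the definition $H^*_\exter(X)=\varinjlim\{H^*(X,E),\ E\in\exter\}$, comparing $H^*_\exter(\vee_{i}^{\R_+}X_i)$ with $\prod_i H^*_\exter(X_i)$ requires interchanging a filtered colimit with an arbitrary product, which fails in general. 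Likewise, no ``exactness properties of exterior cohomology'' have been recorded earlier, and extracting a long exact sequence directly from the colimit definition runs into the same interchange problem (the externologies of $A$, $B$, $A\cap B$ and $X$ give different indexing systems). The paper's key device, absent from your proposal, is the Alexandroff exterior construction $X\mapsto X^\infty$ together with the isomorphism $H^*_\exter(X)\cong\widetilde H^*(X^\infty)$ of Theorem \ref{igualdadcohomologia} (itself resting on tautness/continuity of the underlying theory). Since $(-)^\infty$ carries exterior coproducts to wedges and exterior homotopy pushouts to classical excisive triads (Proposition \ref{propiedades}), both axioms reduce to additivity and Mayer--Vietoris for the classical reduced theory. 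Without this reduction, or some substitute for it, your two axioms remain unproved and the rest of the argument has nothing to stand on.

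A second, smaller omission: whether you build the telescope by hand or cite Brown's abstract theorem, you must verify the categorical input on $(\mathbf{Ho CW}^{\mathbb{R}_+},\mathbf{Ho CW}^{\mathbb{R}_+}_f)$, in particular that a direct system admits a weak colimit $Y$ with $[Y,Z]^{\R_+}\twoheadrightarrow\lim\,[X_n,Z]^{\R_+}$ and $\mathrm{colim}\,[Z,X_n]^{\R_+}\cong[Z,Y]^{\R_+}$ for finite $Z$. The latter is not a $\varprojlim^1$ issue but a compactness statement: exterior maps from Hausdorff, locally compact, $\sigma$-compact spaces with the cocompact externology factor through a finite stage of a telescope of closed, e-closed inclusions. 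This, together with the e-path connectivity of $K_n$ (checked in the paper by evaluating the representing functor on $S^0_0$ and on the $0$-dimensional $\N$-sphere) and the exterior Whitehead theorem, is what upgrades representability from $\mathbf{Ho CW}^{\mathbb{R}_+}_f$ to all of $\mathbf{Ho CW}^{\mathbb{R}_+}$ and yields uniqueness. Finally, the identification of $K_n$ with $K_B(R^\infty,n)$ is a separate theorem in the paper: your set bijection $[S^q_\eta,K_n]^{\R_+}\cong H^n_\exter(S^q_\eta)$ must still be shown to be a group isomorphism, which requires proving that $H^n_\exter$ is additive on Brown--Grossman classes; but that lies beyond the statement at hand.
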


The translation of this result to the classical cohomology with
compact supports on the proper category reads:

\begin{theorem} \label{intro2} Let $(X,\R_+)$ be a countable,
locally finite, finite dimensional relative CW-complex, and let
$n\ge 0$. There exist an e-path connected, exterior CW-complex
$K_n\in \mathbf{Ho CW}^{\mathbb{R}_+}$ (not necessarily endowed
with the cocompact externology!), unique up to based exterior
homotopy, and a universal element $u\in H_\exter^n(K_n)$, such
that the map
$$T_u:[X,K_n]^{\mathbb{R}_+}\stackrel{\cong}{\longrightarrow} H_c^n(X),\qquad T_u[f]=H_\exter^n(f)(u),$$
is a bijection.
\end{theorem}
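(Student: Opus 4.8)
The plan is to derive Theorem \ref{intro2} from Theorem \ref{intro} by reducing the classical cohomology with compact supports on the proper category to the exterior cohomology on the exterior category. The essential mechanism is the full embedding $\mathbf{P}\hookrightarrow\mathbf{E}$ described in the introduction, together with the identification of $H_c^n$ with $H_\exter^n$ under the appropriate externology. Concretely, I would first equip the non-compact space $X$ with a suitable externology so that it becomes an object of $\mathbf{E}$; the natural candidate is the cocompact externology, whose ``neighbourhoods of infinity'' are complements of compact sets. Under this structure the exterior cohomology $H_\exter^n(X)$, which is defined as a relative or limit-type cohomology measuring the behaviour at infinity, should coincide with the classical compactly supported cohomology $H_c^n(X)$. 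I expect this comparison isomorphism $H_\exter^n(X)\cong H_c^n(X)$ to have been established earlier (it is precisely the content of the assertion that exterior cohomology ``extends'' compactly supported cohomology, cf. the references to \cite{spanier4}), so I would invoke it rather than reprove it.

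Next I would verify that the hypotheses of Theorem \ref{intro2}, namely that $(X,\R_+)$ is a countable, locally finite, finite-dimensional relative CW-complex, guarantee that $X$, viewed with its externology, is an exterior CW-complex in $\mathbf{Ho CW}^{\mathbb{R}_+}$. This is the step where the combinatorial finiteness conditions are consumed: local finiteness and finite dimensionality are exactly what is needed to build the exterior cell structure compatibly with the proper/exterior maps, and the base ray $\R_+$ provides the exterior basepoint. Once $X$ is recognized as an object of $\mathbf{Ho CW}^{\mathbb{R}_+}$, Theorem \ref{intro} applies directly and produces the classifying space $K_n$, the universal element $u\in H_\exter^n(K_n)$, and the natural bijection
$$T_u:[X,K_n]^{\mathbb{R}_+}\stackrel{\cong}{\longrightarrow} H_\exter^n(X).$$
Composing $T_u$ with the comparison isomorphism $H_\exter^n(X)\cong H_c^n(X)$ yields the desired bijection $[X,K_n]^{\mathbb{R}_+}\cong H_c^n(X)$, and the formula $T_u[f]=H_\exter^n(f)(u)$ is inherited verbatim from Theorem \ref{intro}.

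The parenthetical warning in the statement, that $K_n$ is \emph{not} necessarily endowed with the cocompact externology, signals where the main subtlety lies. The space $X$ carries the cocompact externology in order to realize $H_c^n$, but the classifying space $K_n$ produced by Theorem \ref{intro} lives in the full exterior category and need not be cocompact; its externology is whatever the Brown representability construction delivers. Thus the bijection is between exterior homotopy classes of maps from a cocompact $X$ into a possibly non-cocompact $K_n$, and I must ensure that the exterior hom-set $[X,K_n]^{\mathbb{R}_+}$ is computed in $\mathbf{E}$ and not artificially restricted to the proper category. The main obstacle, then, is precisely the careful bookkeeping of externologies: confirming that passing from the proper setting to the exterior setting does not alter the relevant homotopy classes, and that the finiteness hypotheses on $X$ are genuinely sufficient to place it in the domain of Theorem \ref{intro} while simultaneously validating the identification $H_\exter^n(X)\cong H_c^n(X)$. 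Everything else is a formal composition of an already-proven representability statement with an already-known comparison isomorphism.
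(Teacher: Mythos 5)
Your proposal is correct and follows essentially the same route as the paper, whose proof is a one-line reduction: $(X,\R_+)$ with the cocompact externology is an exterior (indeed finite) based CW-complex, the cocompact externology makes $H_\exter^n(X)=H_c^n(X)$ by definition of the direct limit (Remark after Definition \ref{exteriorcohomology}), and Theorem \ref{intro} then applies. The only detail worth adding is that the paper records $X$ as a \emph{finite} exterior CW-complex, which is the case where Brown's theorem gives the bijection most directly.
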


In other words, cohomology with compact supports of one ended
spaces in the proper category are classified by homotopy classes of
based exterior maps into a classifying space which generally lives
outside the proper category.

We finish by describing, as in the classical case, the exterior
homotopy type of the classifying space for the exterior cohomology
as an Eilenberg-MacLane space for Brown-Grossman homotopy groups.
See Theorem \ref{descripcion}.

In the next section we present a brief summary of exterior
homotopy theory. In Section 2 we set the main properties of the
exterior cohomology by regarding it as the cohomology of the so
called {\em Alexandroff construction} of the given exterior space.
In Section 3 we prove Theorems \ref{intro}, \ref{intro2} and
explicitly describe the exterior homotopy type of the classifying
space for exterior cohomology.

\section{Exterior homotopy theory}

For a brief review of fundamental results on exterior homotopy
theory we refer to \cite[\S1]{G-G-M2}. Here we simply recall the
basic concepts and facts we will use.

A \textit{proper} map is a continuous map
$f:X\rightarrow Y$ such that $f^{-1}(K)$ is a compact subset of
$X,$ for every closed compact subset $K$ of $Y.$ We will denote by
$\mathbf{P}$ the category of spaces and proper maps.
In proper homotopy theory the role of the base point is played by
the half-line $\mathbb{R}_+=[0,\infty).$

 An \textit{exterior space}
$(X,\ex)$ consists of a topological space $(X,\T)$ together with a
non empty family of {\em exterior sets} $\ex\subset \T $, called
\textit{externology} which is closed under finite intersections
and, whenever $U \supset E$, $E \in \ex$, $U\in \T$, then $U\in
\ex$.
 We may think of $\ex$ as a  neighborhood system at infinity. A map $f:(X,\ex ) \rightarrow (X',\ex'
)$ is  \textit{exterior} if it is continuous and $f^{-1}(E) \in
\ex$, for all $E \in \ex'$. A subset $A\subset X$ is called an
{\em exterior neighborhood} if it contains an exterior set. The
complement of an exterior set is called an exterior closed set or
e-closed. The category of exterior spaces shall be denoted by
$\E$.

For a given topological space $X$ we can consider its
\textit{cocompact externology} $\ex_{cc}$ which is formed by the
family of the complements of all closed-compact sets of $X$. The
corresponding exterior space will be denoted by $X_{cc}$. The
correspondence $X\mapsto X_{cc}$ gives rise to a full embedding
\cite[Thm. 3.2]{Ext_1}
$$(-)_{cc}\colon \PP \hookrightarrow \E.$$
Furthermore, the category $\E$ is complete and cocomplete
\cite[Thm. 3.3]{Ext_1}.

Let $X$ and $Y$ be an exterior and a topological space
respectively. On the product space $X\times Y$ consider the
following externology: an open set $E$ is exterior if for each
 $y\in Y$ there exists an open neighborhood of  $y$, $U_y$, and an exterior open
 $E_y$ such that $E_y \times U_y \subset E$.
We denote by $X\timess Y$  the resulting exterior space. If $Y$
is compact, then $E$ is an exterior open if and only if it is an
open set and there exists $G\in \ex_X$ for which $G\times Y\subset
E$. In particular, if $\ex_X=\ex_{cc}^X$ and $Y$ is compact, then
$X_{cc}\timess Y=(X\times Y)_{cc}$. Hence, the {\em cylinder}
 $$- \timess I\colon\E \rightarrow
\E,$$
 together with the obvious natural transformations
 $$\imath_0, \, \imath_1\colon id
\rightarrow - \timess I,\qquad\rho\colon - \timess I \rightarrow
id,$$ provide a natural way to define {\em exterior homotopy} in
$\E$. This functor restricts to the proper category
$$- \timess I=- \times I\colon\mathbf{P} \rightarrow
\mathbf{P}.$$

It is worth mentioning that, unlike in the proper framework
\cite[Thm. 1.4]{Ay-D-Q}, the exterior cylinder has a right adjoint
\cite[Thm. 3.5]{Ext_1}
 $$
 (- )^I\colon \E \rightarrow \E
 $$
which also leads, this time via paths, to the same notion of
exterior homotopy.

Given exterior spaces $X,Z$, the mapping set $Z^X$ of exterior
maps is canonically endowed with the topology generated by
  $$(K,U)=\{\alpha \in Z^X,\,\alpha (K)\subset
  U\}\hspace{5pt}\mbox{and}\hspace{5pt}
  (L,E)=\{\alpha \in Z^X,\,\alpha (L)\subset E\}$$ \noindent
  where $K$ is compact, $U$ is open, $E$ is exterior, and L is e-compact,
that is, $L\setminus F$ is compact for any exterior $F$. Given a
Hausdorff, locally compact space $X$   endowed with the cocompact
externology,  there is a natural bijection
$$
Hom_E(X\bar{\times }Y,Z)\cong Hom_{{Top}}(Y,Z^X).
$$
The right framework for pointed exterior homotopy is the category
$\E^{\R_+}$ of {\em based exterior spaces} or {\em exterior spaces
under $\mathbb{R}_+$} in which $\mathbb{R}_+$ is endowed with the
cocompact externology. Its objects are pairs $(X,\alpha),$ where
$\alpha\colon \R_+ \rightarrow X$ is an exterior map (the
\textit{base ray}). Morphisms $f\colon (X,\alpha ) \rightarrow
(Y,\beta )$ are exterior maps $f\colon X\rightarrow Y$ for which
$f \alpha = \beta .$

From now on $\N\subset \R_+$ will always be endowed with the
induced externology. Also, any compact space is considered with
the topology as externology. Note that any $(X,\alpha)\in
\E^{\R_+}$ may, and will henceforth, be considered in the category
$\E^\N$ of exterior spaces under $\N$ by composing $\alpha$ with
the exterior inclusion $\N\hookrightarrow \R$.

 Given
$(X,\alpha )\in \E^{\R_+}$, the \textit{based exterior cylinder of
$X$}, $I^{\R_+} X$, is defined by the pushout in $\E:$
$$
\xymatrix{
  \R_+\timess I \ar[d]_{\alpha\timess id} \ar[r]^{\rho} & \R_+ \ar[d]^{} \\
  X\timess I \ar[r]^{} & I^{\R_+} X  }
$$
The notion of {\em based exterior homotopy} is then naturally
defined in the obvious way. This can be dually introduced by means
of the \textit{based exterior cocylinder of $X$}.

We also need to  recall the notion of well based exterior spaces. An exterior map $j\colon A \rightarrow X$
is an \textit{exterior cofibration} if it satisfies the Homotopy
Extension Property {\em (HEP)} in $\E$. That is, if for any
commutative diagram of exterior maps
$$\xymatrix{
  A \ar[d]_{j} \ar[r]^{\imath_0} & A\timess I \ar[d]_{}
  \ar@/^/[ddr]^{j\timess id}  \\
  X \ar[r]^{} \ar@/_/[drr]_{\imath_0}   & Y             \\
                &               & {X\timess I} \ar@{.>}[lu]}$$ the
dotted arrow always exists. In the proper setting, if $j$ is a
proper map between Hausdorff and locally compact spaces, then  $j$
is a proper cofibration if and only if $j_{cc}$ is an exterior
cofibration. Recall that a proper cofibration is a proper map
which satisfies the corresponding proper homotopy extension
property.

A based exterior space $(X,\alpha)$ is {\em well based} if $\alpha
$ is an exterior closed cofibration. We denote the corresponding
full subcategory by  $\E^{\R_+}_w\subset \E^{\R_+}$. We point out
that $\E^{\R_+}_w$ verifies all the axioms required for a closed
model category, except for being closed for finite limits and
colimits  \cite[Thm. 2.12]{G-G-M}. Fibrations (resp. cofibrations)
are  exterior based maps which verify the Homotopy Lifting
Property (resp. the Homotopy Extension Property), while  weak
equivalences are  based exterior homotopy equivalences.
Nevertheless, many of such limits, as pullbacks of fibrations and
pushouts of cofibrations, can be constructed within $\E_w^{\R_+}$
and thus, {\em exterior homotopy pullbacks and pushouts} are
defined within this category.

Next, we recall the notion of {\em exterior CW-complex} we use. It
is slightly more general than the original one introduced in
\cite{G-G-M2}, which include for instance, the classical
CW-complexes (with topology as externology) and, in the proper
case,  the spherical objects under a tree of \cite[\S 2]{B_pre},
\cite[IV]{B-Q} and \cite[1.3]{B-Q2}.

Given $n\ge 0$, we denote by  $\SN^k$  either the classical $k$-dimensional
sphere $S^k$ or the
$k$-dimensional $\N$-sphere $\N\timess S^{k}.$ Analogously $\DN^k$
will ambiguously denote either the classical $k$-dimensional disk $D^k$
or $\N\timess D^{k},$ the $k$-dimensional $\N$-disk. The inclusions
 $\SN^{k-1} \hookrightarrow \DN^{k}$ are exterior cofibrations.

A \textit{relative exterior CW-complex} $(X,A)$ is an exterior
space $X$ together with a filtration of exterior subspaces
$$A=X^{-1} \subset X^0 \subset X^1 \subset \ldots \subset
X^n\subset \ldots \subset X$$ \noindent for which
$X=\mbox{colim}\hspace{3pt}X^n,$ and for each $n\geq 0$, $X^n$ is
obtained from $X^{n-1}$ as the exterior pushout
$$\xymatrix@C=1.6cm@R=1cm{
  \sqcup_{\gamma \in \Gamma} \SN^{n-1}_\gamma \ar@{^{ (}->}[d]_{} \ar[r]^{\sqcup_{\gamma \in \Gamma}\varphi_\gamma} & X^{n-1} \ar@{^{ (}->}[d]^{} \\
  \sqcup_{\gamma \in \Gamma} \DN^{n}_\gamma \ar[r]^{\sqcup_{\gamma \in \Gamma}\psi_\gamma} & X^n   }$$
via the  attaching maps $\varphi_\gamma\colon \SN^{n-1} \rightarrow
X^{n-1}$. The resulting maps $\psi_\gamma$ are called
{\em characteristic} and the exterior spaces $\psi_\gamma(\DN^n_\gamma)$
are the {\em cells} of dimension $n$ of $X$. If $A=\emptyset $, then $X$
is simply called \emph{exterior CW-complex}. On the other hand,  if
$A=\mathbb{R}_+$, then $(X,\mathbb{R}_+)$ is  called a
\emph{based exterior CW-complex}.

Note that $(X,\mathbb{R}_+)\in \E^{\R_+}_w$ as the inclusion
$\mathbb{R}_+\hookrightarrow X$ is a closed cofibration. We will
denote by $\mathbf{CW}^{\mathbb{R}_+}$, (resp.
$\mathbf{CW}^{\mathbb{R}_+}_f$)  the full subcategory of
$\mathbf{E}^{\mathbb{R}_+}_w $ formed by based exterior
CW-complexes (resp. finite based exterior CW-complexes). We denote
by $ \mathbf{Ho CW}^{\mathbb{R}_+}$ (resp. $ \mathbf{Ho
CW}^{\mathbb{R}_+}_f$) its corresponding homotopy category with
the same objects, and whose set of morphisms $[X,Y]^{\R_+}$
between $(X,\R_+)$ and $(Y,\R_+)$, or simply $X$ and $Y$ for
simplicity, are based exterior homotopy classes of exterior maps.
Remark that a finite exterior CW-complex is in general a finite
dimensional infinite classical CW-complex. Also,  any classical
CW-complex is an exterior CW-complex with its topology as
externology. Other important class of exterior CW-complexes are
constituted by the open differential manifolds and PL-manifolds as
they admit a locally finite countable triangulation, which
describes the exterior CW-structure \cite[\S 2(ii)]{G-G-M2}.

On the other hand, if $(X,A)$ is any countable, locally finite,
finite dimensional relative CW-complex in which $A$ is a Hausdorff
locally compact space, then it is not difficult to check that
$(X_{cc},A_{cc})$ is a relative exterior CW-complex
\cite{garcalpari}, \cite[Prop. 2.3]{G-G-M}. As we will use them
later, we point out that there are exterior versions of the
Whitehead and cellular approximation theorems for exterior
CW-complexes \cite[Thms. 12,13,14]{G-G-M2}, \cite[Thms.
4.1.19,4.1.26]{Thesis}.

We finish this section by recalling how homotopy groups are
introduced in the exterior setting. For it, consider $\N$-spheres
$\SN^k=\N \bar{\times}S^k$ and $\N$-disks $\DN^{k+1}=\N
\bar{\times}D^{k+1}$ as spaces in $\E^{\N}$ via the natural
exterior inclusions  $\eta\colon\N \hookrightarrow \SN \subset \DN
$, $\eta(n)=(n,*)$.  Given any $(X,\alpha)\in \E^{\R_+}$ (or more
generally, in $\E^{\N}$),  and any $k\ge 0$, the {\em $k$-th
Brown-Grossman exterior homotopy group of $(X,\alpha)$}
\cite[2.3]{Ext_2} is defined as
$$\piB_k(X,\alpha)=[(\SN^k, \eta), (X,\alpha)]^{\Nmed} \: $$
\noindent where $[-,-]^{\Nmed}$ denotes the set of homotopy
classes under $\N .$
The group structure, for $k\ge1$, is given by the natural bijection
$$[(\SN^{k}, \eta), (X,\alpha)]^{\Nmed} \cong [(S^{k}, *),
(X^{\Nmed},\alpha)]^*$$
with the
ordinary $k$-th homotopy group of the pair
 $(X^{\Nmed},\alpha)$ (see \cite[Prop. 3.2]{Ext_1} or \cite[Rem. 2.4]{Ext_2}).

 A ``continuous'' way to see these homotopy groups is the following
 (see for instance \cite[Rem. 7]{G-G-M2}). For each $n\ge 1$ consider the based,
 finite, exterior CW-complex $S^n_\eta$ obtained as the pushout
$$\xymatrix{\N \ar@{^{(}->}[d]
\ar@{^{(}->}[r]^\eta & \SN^n \ar@{^{(}->}[d] \\
\R_+ \ar@{^{(}->}[r]_\xi & S^n_\eta.
}
$$
That is, $S^n_\eta$ is the half real line, with an $n$-sphere
attached to any integer number, and endowed with the cocompact
externology. For any based exterior space $(X,\alpha)$, the
universal property of the pushout provides a canonical isomorphism
$$[(S^n_\eta,\xi), (X,\alpha)]^{\R_+} \cong [(\SN^n,\eta),(X,\alpha)]^\N=\piB_n(X,\alpha).$$

An exterior space $X\in \E$ is {\em exterior $n$-connected} if
it is $n$-connected in the classical sense, and $\piB_k(X,\alpha
)=\{0\}$, $0\leq k\leq n$, for any exterior base sequence
$\alpha\colon \N \rightarrow X$. The space $X$ is {\em e-path connected} if it is exterior $0$-connected.

An exterior map $f\colon Y \rightarrow Z$ is an
\textit{exterior $n$-equivalence}  if it is a classical
$n$-equivalence and $f_*\colon \piB_k(Y,\alpha) \longrightarrow
\piB_k(Z,f \alpha)$ is isomorphism for $0\leq k \leq n-1$ and surjective
for $k=n$, for any exterior base sequence $\alpha:\N \rightarrow
Y$.

\section{Exterior cohomology}
In this section, as stated in the introduction, we set the main
properties of {\em exterior cohomology}, already introduced in
\cite[\S2]{spanier4} under a different approach, and successfully
used in \cite{sevillanos1,sevillanos2,garcalpari}. From now on,
and unless explicitly stated otherwise, we fix a continuous and
additive classical cohomology theory, i.e.,  a contravariant
functor $H^*$ from the category of pairs of spaces to the category
of non negatively graded abelian groups satisfying {exactness,
excision, homotopy invariance, continuity (tautness), and
additivity. By continuity  we mean the following: for every triad
$(X,A,B)$ in which $(A,B)$ is a closed pair, $H^*(A,B)=\varinjlim
H^*(Y,Z)$, with $(Y,Z)$ closed neighborhood of $(A,B)$ in $X$. On
the other hand, additivity means that, for  any family
$\{X_i\}_{i\in I}$, $H^*(\coprod_{i\in I} X_i)\cong \prod_{i\in
I}H^*(X_i)$.

\begin{remark} Our cohomology set of axioms may not be the most
general one, see for instance \cite{spanier5}, to which what
follows can be applied. However, classical theories like \v{C}ech,
or Alexander-Spanier cohomology, fit within our framework.
Moreover, it is worth remarking that both of them coincide with
singular cohomology when we restrict to {\em HLC spaces}
\cite[Chap. III]{bredon}. Recall that a space $X$ is {\em
homologically locally connected} (HLC) if for every  neighborhhod
$U$ of a given point $x\in X$ there is another neighborhood
$V\subset U$ of $x$, such that the morphism induced in reduced
singular homology $\widetilde H_*(V)\to \widetilde H_*(U)$ is
trivial. In particular, CW-complexes, or more generally, locally
contractible spaces are HLC and, for them, our main results hold
even when considering singular cohomology theory.
\end{remark}

\begin{definition}\label{exteriorcohomology}  Let $(X,\mathcal{E})$ be
an exterior space. Define the {\em exterior cohomology} of $X$ as,
$$H_\mathcal{E}^*(X)=\varinjlim \{H^*(X,E),\,\, E\in\mathcal{E}\}.$$
This defines a contravariant functor
$$H^*_\mathcal{E}\colon\E\longrightarrow\mathbf{Sets}.$$
\end{definition}

\begin{remark} (i) Observe  that this definition coincides in most cases
with the one in \cite[Thm. 3.2]{spanier4}. Indeed, a {\em family
of supports} $\phi$ of a space $X$, a concept which goes back to
\cite[I.6]{bredon}, is precisely formed by the complements of the
elements of an externology $\mathcal{E}$ of $X$, i.e.,
$\Phi=\{E^c,\,\,E\in\mathcal{E}\}$.  Moreover, it is
straightforward to show  that a {\em co-$\Phi$ set} of
\cite{spanier4} is nothing but an exterior neighborhood in our
terminology.

(ii) For the special case of singular cohomology, exterior
cohomology was already introduced in \cite{sevillanos1} by
considering, for each $E\in\mathcal{E}$,  the  complex $C^*(X,E)$
of singular cochains of $X$ which vanish in $E$.

(iii) Note also that, choosing $\mathcal{E}=\mathcal{E}_{cc}$ the
cocompact externology, $H_\mathcal{E}^*(X)=H_c^*(X)$ is the
classical compact supported cohomology of $X$. Moreover, see for
instance \cite[Prop. 3.6]{spanier3}, whenever $X$ is Hausdorff and
locally compact,   $H_c^*(X)$ coincide with the reduced cohomology
of the Alexandroff one-point compactification $X^+$ of $X$, that
is,
\begin{equation}\label{ecuacion1}
H_c^*(X)=\ker\bigl(H^*(X^+)\to H^*(\infty)\bigr)=\widetilde
H^*(X^+)
\end{equation}
\end{remark}

In order to show important properties of exterior cohomology, an
analogous equation comparing the exterior cohomology of a given
exterior space $X$ with the reduced cohomology of the so called
{\em Alexandroff exterior construction} of $X$ will be considered.
We explicitly recall this construction, studied in \cite{GC} to
extend the classical result of Dold on partitions of unity
\cite[Thm. 6.1]{dold}, to the proper category.

\begin{definition}\label{alexandroff} Given an exterior space
$(X,\mathcal{E})$ with topology $\tau$, the {\em Alexandroff
exterior construction of $X$} is the based topological space
$X^\infty$ defined as the disjoint union of $X$ with the based
point $\infty$, and endowed with the topology $\tau^\infty=\tau
\cup \{E\cup\{\infty\},\,\,E\in \mathcal{E}\}$.
\end{definition}

If $\mathbf{Top^*}$ denotes the category of pointed spaces and
maps, the above construction defines a functor
$$(-)^{\infty }\colon{\mathbf{E}}\longrightarrow {\mathbf{Top}}^*,$$
for which the following holds.

 \begin{proposition}\label{propiedades}
The functor $(-)^{\infty }$ preserves:
\begin{enumerate}
\item[(i)] Small limits and colimits. In particular
$(X\cup Y)^\infty=X^\infty\cup Y^\infty$ and
$(X\cap Y)^\infty=X^\infty\cap Y^\infty$ for any $X,Y\in\mathbf{E}$.

\item[(ii)] Cylinders, that is,  $(X\bar{\times }I)^{\infty }\cong (X^{\infty }\times
I)/\{\infty\}\times I$, for any $X\in\mathbf{E}$.
In particular, $(-)^{\infty }$ preserves homotopies.

\item[(iii)] Cofibrations and homotopy equivalences, i.e.,
for any exterior cofibration (resp. exterior homotopy
equivalence)  $f\colon X\rightarrow Y$, the map $f^{\infty }\colon
X^{\infty }\rightarrow Y^{\infty }$ is a classical pointed
cofibration (resp.  pointed homotopy equivalence).

\item[(iv)] Homotopy colimits.

\end{enumerate}
\end{proposition}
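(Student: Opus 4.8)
The plan is to verify each of the four preservation properties of the Alexandroff construction $(-)^\infty$ by working directly from its definition, exploiting the fact that the topology $\tau^\infty$ is built from $\tau$ together with the sets $E\cup\{\infty\}$ for $E\in\mathcal{E}$. The functor lands in $\mathbf{Top}^*$ with basepoint $\infty$, and the core observation throughout is that a neighborhood of $\infty$ in $X^\infty$ is precisely $E\cup\{\infty\}$ for an exterior set $E$, so open sets away from $\infty$ see only the topology $\tau$, while open sets containing $\infty$ encode the externology. Each clause will then reduce to checking a universal property or an explicit homeomorphism.

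For clause (i), I would argue that $(-)^\infty$, being defined by adjoining a single point whose neighborhood filter is exactly the externology $\mathcal{E}$ (which is closed under finite intersections and upward closed by definition), is in fact a right adjoint (or admits an explicit left adjoint description), and hence preserves all small limits; a dual or direct argument handles colimits. Concretely, for a limit $\varprojlim X_i$ in $\mathbf{E}$ one checks that the induced map $(\varprojlim X_i)^\infty\to\varprojlim(X_i^\infty)$ is a homeomorphism by verifying that the externology of the limit is generated by pullbacks of the externologies of the $X_i$, so the neighborhoods of $\infty$ match on both sides; the colimit case is analogous using that $\mathcal{E}$ of a colimit is the family of sets whose preimages are all exterior. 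The stated special cases $(X\cup Y)^\infty=X^\infty\cup Y^\infty$ and $(X\cap Y)^\infty = X^\infty\cap Y^\infty$ then follow as instances. For clause (ii), I would write down the natural comparison map $(X\bar\times I)^\infty\to (X^\infty\times I)/(\{\infty\}\times I)$ sending $(x,t)\mapsto [(x,t)]$ and $\infty\mapsto[\infty]$, and check it is a homeomorphism: the key point is that the externology of the cylinder $X\bar\times I$ (with $I$ compact) is characterized, as recalled in the excerpt, by the condition that $E$ is exterior iff there is $G\in\mathcal{E}_X$ with $G\times I\subset E$, and this matches exactly the neighborhoods of the collapsed segment $\{\infty\}\times I$ in the quotient. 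Preservation of homotopies is then immediate, since a homotopy is a map out of a cylinder.

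For clause (iii), preservation of cofibrations should follow from the cylinder identification of (ii) together with the characterization of cofibrations via the retraction/HEP: an exterior cofibration $j\colon A\to X$ yields a retraction $X\bar\times I\to (X\bar\times\{0\})\cup(A\bar\times I)$, and applying $(-)^\infty$ and using (ii) transports this to a classical retraction exhibiting $j^\infty$ as a pointed cofibration. Preservation of homotopy equivalences is formal once homotopies are preserved (clause (ii)): a homotopy inverse and the homotopies witnessing $g\circ f\simeq\mathrm{id}$, $f\circ g\simeq\mathrm{id}$ map to the corresponding pointed data. Clause (iv) on homotopy colimits then combines (i) and (ii)/(iii): since homotopy colimits are built from ordinary colimits together with cylinders and mapping cylinders (equivalently, as colimits of diagrams involving cofibrant replacements along cofibrations), preservation of both strict colimits and of the cylinder/cofibration structure yields preservation of homotopy colimits.

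The main obstacle I anticipate is clause (ii), the cylinder identification, because the externology on the exterior cylinder $X\bar\times I$ is subtle and the quotient $(X^\infty\times I)/(\{\infty\}\times I)$ collapses a whole segment rather than a point, so one must carefully match open sets near the collapsed segment with exterior opens of the cylinder and confirm the bijection is a homeomorphism in both directions (continuity of the inverse is where point-set care is needed). Everything downstream — cofibrations, homotopy equivalences, and homotopy colimits — leans on this identification, so I would invest the most effort in making the homeomorphism of (ii) fully rigorous, after which the remaining clauses follow by the categorical and homotopical formalism sketched above.
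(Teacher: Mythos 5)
Your overall architecture matches what the paper intends: the paper's own proof simply cites \cite[\S 2,3]{GC} for (i), (ii) and preservation of homotopy equivalences, and then declares preservation of cofibrations and homotopy colimits to be immediate consequences --- which is exactly how you derive (iii) and (iv) from (i) and (ii). Your treatment of (ii) (comparison map plus the tube-lemma characterization of the cylinder externology), of cofibrations via the retraction characterization, of homotopy equivalences as a formal consequence of homotopy preservation, and of homotopy colimits is sound.

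There is, however, a genuine error in your clause (i): the adjunction goes the other way. The functor $(-)^\infty$ is a \emph{left} adjoint, not a right adjoint: its right adjoint sends a pointed space $(Z,z_0)$ to $Z$ equipped with the externology of open neighborhoods of $z_0$ (a pointed map $X^\infty\to Z$ is precisely an exterior map $X\to Z$ for this externology). This gives colimit preservation formally, but $(-)^\infty$ does \emph{not} preserve arbitrary small limits: already for a binary product the comparison map $(X\times Y)^\infty\to X^\infty\times Y^\infty$ fails to be surjective, since the right-hand side contains the points $(x,\infty)$ and $(\infty,y)$. So your proposed ``concrete verification that the neighborhoods of $\infty$ match on both sides'' would break down exactly there, and the claim that limit preservation follows from adjointness is unsalvageable. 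The limit statements that are actually true and used in the paper are the particular ones displayed in (i), namely intersections (and unions) of exterior subspaces of a common ambient space, where the two copies of $\infty$ coincide inside the ambient Alexandroff construction; these must be checked directly, as must any other specific limits one wishes to use, rather than deduced from a nonexistent left adjoint to $(-)^\infty$.
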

\begin{proof} For (i),(ii)  and  preservation of homotopy
equivalences we refer to  \cite[\S2,3]{GC}. Preservation of
cofibrations and homotopy colimits are then immediate
consequences.
\end{proof}

\begin{corollary}\label{infinitocw}
In particular, $(-)^{\infty }$ induces a functor denoted in the same way
$$
(-)^{\infty }\colon\mathbf{Ho CW}^{\mathbb{R}_+}\longrightarrow {\mathbf{HoTop}}^*.
$$
\hfill$\square$
\end{corollary}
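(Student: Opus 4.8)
The plan is to deduce Corollary \ref{infinitocw} directly from Proposition \ref{propiedades} by checking that the functor $(-)^\infty$ descends to the homotopy categories. First I would recall that $\mathbf{Ho CW}^{\mathbb{R}_+}$ has as objects the based exterior CW-complexes and as morphisms based exterior homotopy classes of exterior maps, while $\mathbf{HoTop}^*$ has pointed homotopy classes of pointed maps as morphisms. So to obtain an induced functor it suffices to verify that $(-)^\infty$ respects the two equivalence relations by which we pass to homotopy classes, together with being well defined on objects and preserving composition and identities.

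The key step is the passage to homotopy classes. On objects there is nothing to do: for any $(X,\mathbb{R}_+)\in \mathbf{Ho CW}^{\mathbb{R}_+}$ the construction $X^\infty$ is a well-defined pointed space with base point $\infty$. For morphisms, I would argue that if $f,g\colon X\rightarrow Y$ are exterior maps that are \emph{based exterior homotopic} (i.e. homotopic through a based exterior cylinder $I^{\mathbb{R}_+}X$), then $f^\infty$ and $g^\infty$ are \emph{pointed} homotopic. This is exactly where part (ii) of Proposition \ref{propiedades} does the work: since $(-)^\infty$ preserves cylinders, $(X\bar\times I)^\infty\cong (X^\infty\times I)/(\{\infty\}\times I)$, an exterior homotopy $H\colon X\bar\times I\rightarrow Y$ yields a map $H^\infty$ whose domain is the reduced cylinder on $X^\infty$, i.e. a pointed homotopy. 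I must also account for the fact that we are in the \emph{based} setting, so the homotopy is through the based exterior cylinder $I^{\mathbb{R}_+}X$ defined by the pushout collapsing $\mathbb{R}_+\bar\times I$ along $\rho$; applying $(-)^\infty$ and using that the functor preserves colimits (part (i)) shows that $(I^{\mathbb{R}_+}X)^\infty$ is precisely the reduced cylinder $(X^\infty\times I)/(\{\infty\}\times I)$ with the base ray collapsed, so a based exterior homotopy maps to a genuine pointed homotopy rel base point.

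Finally I would note that $(-)^\infty$ is already a functor $\mathbf{E}\to\mathbf{Top}^*$, so it automatically preserves composition and identities; the only thing requiring verification was compatibility with the homotopy relations, which the preceding paragraph supplies. Hence the assignment $[f]^{\mathbb{R}_+}\mapsto [f^\infty]$ is well defined and functorial, giving the induced functor on homotopy categories. The main (and only mild) obstacle is bookkeeping the based versus unbased cylinders: one must confirm that the base-ray-collapsing pushout defining $I^{\mathbb{R}_+}X$ is sent by $(-)^\infty$ to the base-point-collapsing quotient defining the reduced cylinder of $X^\infty$, which follows from the colimit preservation in part (i) together with the cylinder identity in part (ii). Everything else is formal, which is why the corollary is stated without proof.
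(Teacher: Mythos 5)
Your argument is correct and is essentially the paper's: the corollary is stated with no proof precisely because it follows formally from Proposition \ref{propiedades}, with part (ii) (preservation of cylinders, hence homotopies) doing the real work and part (i) handling the based cylinder as a pushout, exactly as you describe. If anything you prove slightly more than needed, since the target $\mathbf{HoTop}^*$ only requires homotopies rel the single point $\infty$, which already follows from the identification $(X\bar\times I)^\infty\cong (X^\infty\times I)/\{\infty\}\times I$ without invoking the based exterior cylinder.
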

 We also point out  that, for any   $X\in \mathbf{Ho CW}^{\mathbb{R}_+}$,  $X^{\infty
}$ is Hausdorff and paracompact \cite[Prop.14]{GC}.

Next, we show that Equation \ref{ecuacion1} also holds in the exterior setting.

\begin{theorem}\label{igualdadcohomologia}
For any exterior space $(X,\mathcal{E})$,
$$H^*_\mathcal{E}(X)\cong \widetilde H^*(X^\infty)=\ker\bigl(H^*(X^\infty)\to H^*(\infty)\bigr).
$$
\end{theorem}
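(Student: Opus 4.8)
The plan is to unwind both sides of the claimed isomorphism and identify them through the defining colimit. The key observation is that the exterior cohomology $H^*_\ex(X)=\varinjlim\{H^*(X,E),\,E\in\ex\}$ is a colimit over the externology $\ex$, and I want to show this colimit computes $\widetilde H^*(X^\infty)$. The strategy rests on relating each relative group $H^*(X,E)$ to the reduced cohomology of $X^\infty$ via the continuity (tautness) axiom, exploiting the fact that in $X^\infty$ the exterior sets $E$ become precisely the punctured neighborhoods of the base point $\infty$.

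First I would set up the comparison at the base point. Note that $\{E\cup\{\infty\}\,:\,E\in\ex\}$ is, by Definition \ref{alexandroff}, a neighborhood basis of $\infty$ in $X^\infty$; since the externology is closed under finite intersection and upward inclusion, this is a directed (filtered) system of open neighborhoods of $\infty$. The continuity axiom then gives
$$H^*(\{\infty\})=H^*\bigl(\textstyle\bigcap_E (E\cup\{\infty\})\bigr)\cong\varinjlim_E H^*(E\cup\{\infty\}).$$
Next I would run the long exact sequence of the pair $(X^\infty, E\cup\{\infty\})$ for each $E\in\ex$, and pass to the colimit over $E$. Because filtered colimits are exact on abelian groups, the colimit of these long exact sequences is again exact, yielding a long exact sequence relating $\varinjlim_E H^*(X^\infty, E\cup\{\infty\})$, $H^*(X^\infty)$, and $\varinjlim_E H^*(E\cup\{\infty\})=H^*(\{\infty\})$. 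The latter group sits in the definition of reduced cohomology $\widetilde H^*(X^\infty)=\ker(H^*(X^\infty)\to H^*(\infty))$, so this exact sequence is exactly the tool that should identify $\varinjlim_E H^*(X^\infty, E\cup\{\infty\})$ with $\widetilde H^*(X^\infty)$.

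The remaining and more delicate step is to match the relative terms: I must produce a natural isomorphism $H^*(X,E)\cong H^*(X^\infty, E\cup\{\infty\})$ compatible with the direct system, so that $H^*_\ex(X)=\varinjlim_E H^*(X,E)$ maps to $\varinjlim_E H^*(X^\infty,E\cup\{\infty\})$. The natural candidate is excision: the inclusion $(X,E)\hookrightarrow(X^\infty, E\cup\{\infty\})$ should excise the point $\infty$, since $\infty$ lies in the interior of $E\cup\{\infty\}$ (as $E\cup\{\infty\}$ is open in $X^\infty$) and $X=X^\infty\setminus\{\infty\}$. \textbf{The hard part will be} verifying that excision applies cleanly in this possibly non-CW, merely Hausdorff setting — one must confirm the closure/interior hypotheses of the excision axiom for the subset being removed, and for \v Cech or Alexander--Spanier theories invoke tautness to replace $E$ and $E\cup\{\infty\}$ by suitable closed neighborhoods where excision is unambiguous. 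Once excision gives $H^*(X,E)\cong H^*(X^\infty,E\cup\{\infty\})$ naturally in $E$, assembling the pieces — the excision isomorphism on relative terms, the continuity isomorphism at $\infty$, and exactness of filtered colimits — yields the desired natural identification $H^*_\ex(X)\cong\widetilde H^*(X^\infty)$.
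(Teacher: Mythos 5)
Your argument is correct in outline, but it follows a genuinely different route from the paper's. The paper never invokes excision or a colimit of long exact sequences: it applies the continuity (tautness) axiom twice, first to pass from the system $\{H^*(X,E)\}_{E\in\mathcal{E}}$ to the system $\{H^*(X^\infty,B)\}$ indexed by \emph{closed} neighborhoods $B$ of $\infty$ (using that $X^\infty$ is the only closed neighborhood of $X$ in $X^\infty$), and then to collapse that colimit to $H^*(X^\infty,\infty)$, which equals $\widetilde H^*(X^\infty)$ by the usual retraction argument. You instead decompose the proof as excision, plus exactness of filtered colimits applied to the long exact sequences of the pairs $(X^\infty,E\cup\{\infty\})$, plus tautness at $\infty$. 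Two remarks on your version. First, the step you flag as the hard one is actually the easiest: $X$ and $E\cup\{\infty\}$ are both \emph{open} in $X^\infty$ and cover it, so the excisive-couple form of excision gives $H^*(X^\infty,E\cup\{\infty\})\cong H^*(X,E)$ directly (equivalently, $\overline{\{\infty\}}=\{\infty\}$ is contained in $\operatorname{int}(E\cup\{\infty\})=E\cup\{\infty\}$ because $\{\infty\}$ is closed and $E\cup\{\infty\}$ is open); no tautness detour through closed neighborhoods is needed there. Second, the step that does need care is your first one: the continuity axiom as stated in the paper computes $H^*(\infty)$ as a colimit over \emph{closed} neighborhoods, whereas your system $\{E\cup\{\infty\}\}_{E\in\mathcal{E}}$ is exactly the system of \emph{open} neighborhoods of $\infty$, and since no regularity of $X^\infty$ at $\infty$ is assumed the two systems need not interleave. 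The retraction $X^\infty\to\{\infty\}$ only exhibits $H^*(\infty)$ as a natural direct summand of $\varinjlim_E H^*(E\cup\{\infty\})$; killing the complementary summand $\varinjlim_E\widetilde H^*(E\cup\{\infty\})$ is precisely where a tautness statement for arbitrary (not just closed) neighborhoods must be invoked, or else the colimit must be rerouted through closed neighborhoods as in the paper. Granting that input, your proof closes; it buys a more mechanical, self-contained verification, while the paper's double-tautness argument buys brevity at the cost of a rather compressed first step.
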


\begin{proof}
First, observe that $X^\infty$ is an exterior space endowed with
the externology $\mathcal{E}^\infty$ given by the open
neighborhoods of $\infty$, that is
$\mathcal{E}^\infty=\{E\cup\{\infty\},\,\,E\in\mathcal{E}\}$.
Moreover, the inclusion $X\hookrightarrow X^\infty$ is an exterior
map.  Hence, by the continuity of $H^*$,  considering the triad
$(X^\infty,X,\emptyset)$, and taking into account that $X^\infty$
is the only closed exterior neighborhood of $X$,
$$
H^*_\exter(X)=\varinjlim H^*(X,E)\cong\varinjlim H^*(X^\infty,B),
$$
where $B$ ranges over the family of closed exterior neighborhoods
of $X^\infty$ (that is, $B$ is closed in $X^\infty$ and contains
an exterior set, also of $X^\infty$). But this family is precisely
that of all closed neighborhoods of the point $\infty$ which is
itself closed. Again by the continuity of $H^*$,
$$
\varinjlim H^*(X^\infty,B)\cong H^*(X^\infty,\infty)
$$
and the theorem is proved.
\end{proof}

\section{Brown representability of exterior cohomology}

In the most general categorical framework, Brown representability
for set-valued  contravariant functors, defined on the homotopy
category of a given closed model category, is now classical and
well understood. See \cite{chriskenee,neeman,rosicky}, or
\cite[Thm. 19]{jardine} for a particularly  explicit and precise
statement. However, these results are not readily applicable to
the exterior cohomology functor on  $\E^{\R_+}_w$. Indeed, on the
one hand,  it is not clear which family of objects is the one to
choose so that it compactly generates the exterior category. On
the other hand, as $\E^{\R_+}_w$ is not in general closed for
limits or colimits,  the required general form of the ``wedge''
and ``Mayer-Vietoris'' properties (see for instance G3 and G4 of
\cite[\S3]{jardine}) for the exterior cohomology functor on
cofibrant objects may not be attained.

We follow the original approach of Brown in \cite{Br}, with the
same notation and language, to prove Theorem \ref{intro} of the
Introduction in this section.

In what follows, an exterior based space $(X,\R_+)$ will often be
denoted simply by $X$.

We begin by proving all necessary  properties on the homotopy
category of (finite) based exterior CW-complexes.

\begin{lemma}\label{fundamentalcategory}
${}$
\begin{itemize}
\item[(i)] The category $\mathbf{Ho CW}^{\mathbb{R}_+}$
(resp. $\mathbf{Ho CW}^{\mathbb{R}_+}_f$) has arbitrary (resp.
finite) coproducts.

\item[(ii)] Any diagram in $\mathbf{Ho CW}^{\mathbb{R}_+}$
or $\mathbf{Ho CW}^{\mathbb{R}_+}_f$ of the form
${X_1} \stackrel{h_1}{\leftarrow}{A} \stackrel{h_2}{\rightarrow} {X_2}$  has a weak pushout.

\item[(iii)] Any given direct system in $\mathbf{Ho CW}^{\mathbb{R}_+}$,
$$\xymatrix{{X_1} \ar[r]^{f_1} & {X_2} \ar[r]^{f_2} & {X_3} \ar[r]^{f_3} \ar[r]
& ... \ar[r] & {X_n} \ar[r]^{f_n} & {X_{n+1}} \ar[r] & ... },$$
admits a weak colimit $(Y,\{g_n\}_{n\ge 1})$ for which the natural
maps
$$
[Y,Z]^{\mathbb{R}_+}\twoheadrightarrow
\mbox{lim}\hspace{3pt}[X_n,Z]^{\mathbb{R}_+}\qquad\text{and}\qquad \mbox{colim}\hspace{3pt}[Z,X_n]^{\mathbb{R}_+}\stackrel{\cong}{\rightarrow}
[Z,Y]^{\mathbb{R}_+}
$$
are, respectively, a surjection  for every $Z\in \mathbf{Ho CW}^{\mathbb{R}_+}$,
and a bijection for every $Z\in \mathbf{Ho CW}^{\mathbb{R}_+}_f$.

\end{itemize}
\end{lemma}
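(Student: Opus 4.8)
The plan is to establish each part by transporting the corresponding classical CW-construction into the exterior setting and then checking the required homotopy-categorical universal properties by hand, since $\mathbf{Ho CW}^{\mathbb{R}_+}$ is not known to inherit limits/colimits from an ambient complete category.

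For part (i), I would simply take the exterior coproduct (disjoint union amalgamated along the common base ray $\mathbb{R}_+$) of a family of based exterior CW-complexes. Concretely, for $\{X_i\}_{i\in I}$ I form $\bigvee_i X_i$, the quotient of $\sqcup_i X_i$ identifying all the base rays to a single copy of $\mathbb{R}_+$. One checks this is again a based exterior CW-complex: the cell structure is the union of the cell structures (the attaching maps land in each $X_i^{n-1}\subset(\bigvee_i X_i)^{n-1}$), and the base ray inclusion remains a closed exterior cofibration. The universal property $[\bigvee_i X_i, Z]^{\mathbb{R}_+}\cong\prod_i[X_i,Z]^{\mathbb{R}_+}$ follows from the pushout defining the wedge together with the definition of based exterior homotopy. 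In the finite case the same works verbatim with $I$ finite, and finiteness of the resulting complex is preserved.

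For part (ii), given $X_1\xleftarrow{h_1}A\xrightarrow{h_2}X_2$, I would first replace $h_1$ and $h_2$ by exterior cofibrations up to homotopy (using the exterior mapping-cylinder construction, which lives in $\mathbf{CW}^{\mathbb{R}_+}$ and exists because $-\timess I$ is available and well-behaved on CW-complexes), and then form the honest exterior pushout $X_1\cup_A X_2$ of the resulting cofibration diagram. Since pushouts of exterior cofibrations can be constructed within $\E_w^{\mathbb{R}_+}$ (noted in the excerpt), and since $(-)^\infty$ preserves these pushouts by Proposition \ref{propiedades}(i), the result is again a based exterior CW-complex. The \emph{weak} pushout property — existence but not uniqueness of the comparison map into any $Z$ receiving compatible maps — is then the standard cofiber-sequence argument: a pair of homotopic-along-$A$ maps $X_1,X_2\to Z$ extends over the pushout, using the HEP for the cofibration $A\hookrightarrow X_i$.

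Part (iii) is where the main work, and the principal obstacle, lies. For a direct system of the $X_n$, I would again cofibrantly replace each $f_n$ and form the exterior mapping telescope $Y=\mathrm{hocolim}\,X_n$, built as a pushout/colimit of exterior cylinders; this is a based exterior CW-complex and $(-)^\infty$ preserves it by Proposition \ref{propiedades}(iv). The surjectivity $[Y,Z]^{\mathbb{R}_+}\twoheadrightarrow\lim[X_n,Z]^{\mathbb{R}_+}$ is the telescope analogue of the classical fact that a coherent family of maps off the stages assembles into a map off the telescope — proved by inductively extending over each cylinder via HEP. The genuinely delicate point is the \emph{bijection} $\mathrm{colim}[Z,X_n]^{\mathbb{R}_+}\xrightarrow{\cong}[Z,Y]^{\mathbb{R}_+}$ for \emph{finite} $Z$: this is a compactness statement asserting that any exterior map from a finite exterior CW-complex $Z$ into the telescope factors (up to exterior homotopy) through some finite stage $X_n$, and that two such factorizations agreeing in $Y$ already agree at some later stage. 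Here the subtlety is that a ``finite'' exterior CW-complex is finite-dimensional but an \emph{infinite} classical CW-complex (as the excerpt warns), so ordinary classical compactness of the image does not immediately apply. I expect to handle this by exploiting the $\N$-indexed, $\bar\times$-product structure of the cells $\SN^k=\N\timess S^k$: the finitely many exterior cells of $Z$, together with the cocompact/externology control, force the image of $Z$ to meet only finitely many telescope stages in an exterior-cofinal sense, after which the classical cellular approximation and HEP arguments close the factorization and uniqueness claims. Making this exterior compactness argument precise — correctly accounting for behaviour at infinity rather than naive set-theoretic compactness — is the crux of the proof.
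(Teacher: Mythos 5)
Your treatment of (i) and (ii) follows the paper's proof essentially verbatim: the coproduct is the wedge $\vee_{i\in I}^{\mathbb{R}_+}X_i$ obtained by amalgamating along the base rays, and the weak pushout is the exterior homotopy pushout, kept inside $\mathbf{Ho\,CW}^{\mathbb{R}_+}$ by first taking exterior \emph{cellular} representatives of $h_1,h_2$ (you should say this explicitly -- the mapping cylinder of a non-cellular map need not be an exterior CW-complex, and the paper invokes the exterior cellular approximation theorem \cite[Thms.~13,14]{G-G-M2} precisely for this) and then factoring through mapping cylinders. Also, your appeal to Proposition~\ref{propiedades} in (ii) is beside the point: that the pushout is again a CW-complex comes from its cell structure, not from $(-)^{\infty}$ preserving pushouts.

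The genuine gap is in (iii). You correctly isolate the crux -- that an exterior map from a finite based exterior CW-complex $Z$ into the colimit $Y$ must factor, together with its homotopies, through a finite stage -- but you do not prove it; you only describe the shape of the argument you ``expect'' to give and acknowledge that making it precise is the hard part. This is exactly the step the paper does \emph{not} leave to an ad hoc compactness heuristic: it arranges, via cellular approximation and mapping cylinders, that each $f_n$ is a closed, e-closed cellular cofibration with the points of $X_k\setminus X_1$ closed and e-closed, and then invokes \cite[Prop.~4.2]{Ext_1} (equivalently \cite[Prop.~4.1.21]{G}), which states that for such a sequence every exterior map $h\colon Z\to Y=\mathrm{colim}\,X_n$ from a Hausdorff, locally compact, $\sigma$-compact space $Z$ with the cocompact externology factors through some $X_n$. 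A finite based exterior CW-complex is such a space (it is a countable, locally finite, finite-dimensional classical CW-complex with the cocompact externology), and applying the same proposition to $Z\bar{\times}I$ gives the injectivity half of the bijection. Without this citation or a complete substitute for it, your argument for the bijection $\mathrm{colim}\,[Z,X_n]^{\mathbb{R}_+}\cong[Z,Y]^{\mathbb{R}_+}$ -- which is the property that ultimately drives the whole Brown representability argument -- is not established.
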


\begin{proof}

(i) Given any family $\{X_i,\}_{i\in I}$ of based exterior
CW-complexes, the exterior (homotopy) pushout
\begin{equation}\label{ecuacion2}
\xymatrix{ {\coprod _{i\in I}(\mathbb{R}_+)_i}
\ar@{^{(}->}[d] \ar[r] & {\mathbb{R}_+} \ar@{^{(}->}[d] \\
{\coprod _{i\in I}X_i} \ar[r] & {\vee _{i\in I}^{\mathbb{R}_+}X_i}
}
\end{equation}
 \noindent is easily checked to be their coproduct in
 $\mathbf{Ho CW}^{\mathbb{R}_+}$. Observe that, if each $(X_i,\mathbb{R}_+)$
 and $I$ are finite, then ${\vee _{i\in I}^{\mathbb{R}_+}X_i}\in \mathbf{CW}^{\mathbb{R}_+}_f$.

(ii) The homotopy pushout of $\xymatrix{{X_1} & {A} \ar[l]_{h_1}
\ar[r]^{h_2} & {X_2} }$ is obviously a weak pushout. For it to lie
within the category $\mathbf{Ho CW}^{\mathbb{R}_+}$ or $\mathbf{Ho
CW}^{\mathbb{R}_+}_f$, it is enough to choose exterior cellular
representatives (see \cite[Thms. 13,14]{G-G-M2}) of $h_1 $ and
$h_2$ and their factorization through exterior mapping cylinders.

(iii)  Again, choosing  exterior cellular representatives and
factorizations through exterior mapping cylinders we may assume,
without losing generality that each $f_n$ of the direct diagram is
a cellular cofibration. This way its colimit $(Y,\{g_n\}_{n\ge
1})$ is a based exterior CW-complex in which each $g_n\colon
X_n\rightarrowtail Y$ is a cofibration. Surjectivity of the first
map and injectivity of the second are obviously satisfied. For the
onto character of the second map it is enough to apply \cite[Prop.
4.1.21]{G} or \cite[Prop. 4.2]{Ext_1} which, for convenience, we
recall here: let
$$
\xymatrix{{X_1} \ar[r]^{f_1} & {X_2} \ar[r]^{f_2} & {X_3} \ar[r]^{f_3} \ar[r]
& ... \ar[r] & {X_n} \ar[r]^{f_n} & {X_{n+1}} \ar[r] & ... }
$$
be a sequence of  injective closed and e-closed exterior maps for
which the points  of $X_k\setminus X_1$ are also closed and
e-closed in $X_k$, for any $k$. As before, if we denote by
$(Y,\{g_n\}_{n\ge 1})$ the colimit of this sequence in the
exterior category, then each exterior map $h\colon Z\to Y$ from a
Hausdorff, locally compact, $\sigma $-compact space with the
cocompact externology, factors through $X_n$ for $n$ sufficiently
large, i.e. there is an exterior map $h_n\colon Z\to X_n$ such
that $g_nh_n=h$.
\end{proof}

Next, we see that, on $\mathbf{Ho CW}^{\mathbb{R}_+}$, the
exterior cohomology functor  satisfies the ``wedge'' and
``Mayer-Vietoris'' properties. For use in what follows, we remark
that for any cohomology theory $H^*$, the reduced functor
$\widetilde H^*$ on the based category satisfied these properties.

\begin{lemma}\label{lema1}
If $\{X_i\}_{i\in I}$ is any family of based exterior
CW-complexes, then there exists a canonical isomorphism
$$H^*_\exter(\vee _{i\in
I}^{\mathbb{R}_+}X_i)\cong \textstyle{\prod} _{i\in
I}H^*_\exter(X_i).$$
\end{lemma}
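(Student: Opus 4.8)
The plan is to reduce this ``wedge'' property for exterior cohomology to the corresponding additivity property of the reduced classical cohomology $\widetilde H^*$ via the Alexandroff construction functor $(-)^\infty$, using Theorem~\ref{igualdadcohomologia}. Concretely, by that theorem we have natural isomorphisms $H^*_\exter(\vee_{i\in I}^{\R_+}X_i)\cong \widetilde H^*\bigl((\vee_{i\in I}^{\R_+}X_i)^\infty\bigr)$ and $H^*_\exter(X_i)\cong \widetilde H^*(X_i^\infty)$, so it suffices to understand how $(-)^\infty$ transforms the exterior wedge and to invoke additivity of $\widetilde H^*$.

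First I would analyze the effect of $(-)^\infty$ on the defining pushout \eqref{ecuacion2} of the exterior wedge. Since by Proposition~\ref{propiedades}(i) the functor $(-)^\infty$ preserves small colimits, applying it to that pushout yields a pushout in $\mathbf{Top}^*$
$$
\xymatrix{ \bigl(\coprod_{i\in I}(\R_+)_i\bigr)^\infty \ar[d] \ar[r] & (\R_+)^\infty \ar[d] \\
\bigl(\coprod_{i\in I}X_i\bigr)^\infty \ar[r] & (\vee_{i\in I}^{\R_+}X_i)^\infty. }
$$
The key computation is to identify the disjoint-union construction under $(-)^\infty$: a disjoint union in $\E$ becomes, after adding a \emph{single} point at infinity whose neighborhoods come from the product externology, the wedge (one-point union) of the individual $X_i^\infty$ at $\infty$, rather than their disjoint union. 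Thus $\bigl(\coprod_{i\in I}X_i\bigr)^\infty\cong \bigvee_{i\in I}X_i^\infty$ as based spaces, and similarly for the copies of $\R_+$. Collapsing the $\R_+$-factors in the pushout then shows that $(\vee_{i\in I}^{\R_+}X_i)^\infty$ is based homotopy equivalent to the classical wedge $\bigvee_{i\in I}X_i^\infty$.

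Having established $(\vee_{i\in I}^{\R_+}X_i)^\infty\simeq \bigvee_{i\in I}X_i^\infty$, I would finish by applying the additivity axiom for the reduced theory $\widetilde H^*$ on the based category, namely $\widetilde H^*(\bigvee_{i\in I}Y_i)\cong \prod_{i\in I}\widetilde H^*(Y_i)$, which holds for any cohomology theory satisfying our additivity axiom (as remarked just before the lemma). Chaining the isomorphisms gives
$$
H^*_\exter(\vee_{i\in I}^{\R_+}X_i)\cong \widetilde H^*\Bigl(\bigvee_{i\in I}X_i^\infty\Bigr)\cong \prod_{i\in I}\widetilde H^*(X_i^\infty)\cong \prod_{i\in I}H^*_\exter(X_i),
$$
and naturality is inherited from the functoriality of each step.

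The main obstacle I expect is the careful identification $\bigl(\coprod_{i\in I}X_i\bigr)^\infty\cong\bigvee_{i\in I}X_i^\infty$ at the level of \emph{topologies and externologies}, especially for an infinite index set $I$: one must check that the single point at infinity correctly records all the externologies simultaneously (its neighborhoods being exactly the sets that are exterior-neighborhoods in cofinitely many or all factors), and that the resulting topology on the wedge matches the classical wedge topology up to the based homotopy equivalence needed. This point-set bookkeeping, together with verifying that collapsing the contractible $\R_+^\infty$ pieces does not disturb the identification, is where the real work lies; the cohomological conclusion is then formal.
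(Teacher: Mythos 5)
Your proposal is correct and follows essentially the same route as the paper: apply $(-)^{\infty}$ to the defining pushout \eqref{ecuacion2}, identify $\bigl(\coprod_{i\in I}X_i\bigr)^{\infty}$ with the classical wedge $\bigvee_{i\in I}X_i^{\infty}$, observe that collapsing the contractible $(\R_+)^{\infty}$-part is a homotopy equivalence, and conclude via Theorem~\ref{igualdadcohomologia} and additivity of $\widetilde H^*$. The only nuance is that in the identification of the externology on the coproduct the neighborhoods of $\infty$ correspond to sets that are exterior in \emph{all} factors (not merely cofinitely many), but you flag this point yourself and it does not affect the argument.
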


\begin{proof}
Applying the functor $(-)^{\infty }$ to \ref{ecuacion2} we obtain,
by Proposition \ref{propiedades}, a (homotopy) pushout in the
classical pointed category,
$$\xymatrix{ {\vee
_{i\in I}(\mathbb{R}_+)_i^{\infty }}
\ar@{^{(}->}[d] \ar[r] & {(\mathbb{R}_+)^{\infty }} \ar@{^{(}->}[d] \\
{\vee _{i\in I}X_i^{\infty }} \ar[r] & ({\vee _{i\in
I}^{\mathbb{R}_+}X_i)^{\infty }}. }$$
\noindent Indeed, note that
 the inclusion
$(\coprod _{i\in I}(\mathbb{R}_+)_i)^{\infty }\hookrightarrow
(\coprod _{i\in I}X_i)^{\infty }$ is homeomorphic to $\vee _{i\in
I}(\mathbb{R}_+)^{\infty }_i\hookrightarrow \vee _{i\in
I}X_i^{\infty }$. On the other hand, since the top row is
trivially a (classical) homotopy equivalence, so is the bottom
row. Applying this and Theorem \ref{igualdadcohomologia}  we
get,\hfill\break

\medskip\noindent
$
H^*_\exter(\vee _{i\in
I}^{\mathbb{R}_+}X_i)\cong\widetilde H^*\bigl((\vee
_{i\in I}^{\mathbb{R}_+}X_i)^{\infty }\bigr){\cong }
\widetilde H^*(\vee _{i\in I}X_i^{\infty
}){\cong } \textstyle{\prod}_{i\in
I}\widetilde H^*(X_i^{\infty } )\cong\textstyle{\prod}_{i\in
I}H^*_\exter(X_i).
$
\end{proof}

\begin{lemma}\label{lema2} If

$$\xymatrix{{A} \ar[d] \ar[r] & {X_2} \ar[d] \\
{X_1} \ar[r] & {X} }$$ is an exterior homotopy pushout of exterior
based CW-complexes, then the induced homomorphism
$$\xymatrix{
H^*_\exter(X) \ar[r]  &
H^*_\exter(X_1)\times
_{H^*_\exter(A)}H^*_\exter(X_2) }$$
is surjective
\end{lemma}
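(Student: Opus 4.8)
The plan is to transport the statement to the classical pointed category by means of the Alexandroff construction $(-)^\infty$ and then invoke the ordinary Mayer--Vietoris sequence. First I would apply $(-)^\infty$ to the given exterior homotopy pushout. Since by Proposition~\ref{propiedades}(iv) this functor preserves homotopy colimits, the resulting square
$$\xymatrix{{A^\infty} \ar[d] \ar[r] & {X_2^\infty} \ar[d] \\
{X_1^\infty} \ar[r] & {X^\infty} }$$
is a homotopy pushout in $\mathbf{Top}^*$. Using Proposition~\ref{propiedades}(iii) I may replace it, up to pointed homotopy equivalence, by an honest pushout along a cofibration, so that $X^\infty = X_1^\infty\cup_{A^\infty}X_2^\infty$ is an excisive triad to which the fixed cohomology theory applies.

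For such a triad the exactness and excision axioms furnish the reduced Mayer--Vietoris long exact sequence
$$\cdots \to \widetilde H^n(X^\infty)\xrightarrow{(i_1^*,i_2^*)} \widetilde H^n(X_1^\infty)\oplus \widetilde H^n(X_2^\infty)\xrightarrow{\ \delta\ } \widetilde H^n(A^\infty)\to\cdots,$$
in which $\delta(\alpha,\beta)=\alpha|_{A^\infty}-\beta|_{A^\infty}$. The fibre product $\widetilde H^n(X_1^\infty)\times_{\widetilde H^n(A^\infty)}\widetilde H^n(X_2^\infty)$ is exactly $\ker\delta$, and by exactness this coincides with the image of $(i_1^*,i_2^*)$. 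Hence the canonical map from $\widetilde H^n(X^\infty)$ into the fibre product is surjective.

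Finally I would transfer this conclusion back to exterior cohomology through Theorem~\ref{igualdadcohomologia}. Its isomorphisms $H^*_\exter(-)\cong\widetilde H^*((-)^\infty)$ are natural, so they identify the exterior restriction maps $H^*_\exter(X)\to H^*_\exter(X_i)$ with the classical ones $\widetilde H^*(X^\infty)\to\widetilde H^*(X_i^\infty)$, and likewise identify the two fibre products over $H^*_\exter(A)$ and $\widetilde H^*(A^\infty)$. Surjectivity therefore passes verbatim to the map $H^*_\exter(X)\to H^*_\exter(X_1)\times_{H^*_\exter(A)}H^*_\exter(X_2)$.

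I expect the only delicate step to be the verification that the Alexandroff image of the exterior homotopy pushout is genuinely a classical homotopy pushout of spaces of CW-type on which Mayer--Vietoris is available. This rests on the preservation properties gathered in Proposition~\ref{propiedades} together with the fact, recorded after Corollary~\ref{infinitocw}, that $X^\infty$ is Hausdorff and paracompact for every $X\in\mathbf{Ho CW}^{\mathbb{R}_+}$; these ensure the triad is excisive and that the sequence applies.
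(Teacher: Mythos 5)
Your proposal is correct and follows essentially the same route as the paper: rectify the homotopy pushout to an excisive triad (the paper does this at the exterior level via mapping cylinders and cellular approximation before applying $(-)^{\infty}$, you do it just after), invoke exactness of the reduced Mayer--Vietoris sequence at the middle term, and transport back through the natural isomorphism of Theorem~\ref{igualdadcohomologia}. The identification of the fibre product with $\ker\delta$ and the appeal to Proposition~\ref{propiedades} are exactly the points the paper relies on.
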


\begin{proof}
An argument by factorizations through exterior mapping cylinders
and exterior cellular approximations lets us assume, without
losing generality, that the above is a pushout in $\mathbf{Ho
CW}^{\mathbb{R}_+}$ where all maps are closed exterior
cofibrations, and therefore $A=X_1\cap X_2$ and $X=X_1\cup X_2.$
Applying the functor $(-)^{\infty }$, see Proposition
\ref{propiedades}, we obtain another pushout with analogous
properties in the based homotopy category
$$\xymatrix{{A^{\infty }} \ar@{^{(}->}[d]
\ar@{^{(}->}[r] & {X_2^{\infty }} \ar@{^{(}->}[d] \\
{X_1^{\infty }} \ar@{^{(}->}[r] & {X^{\infty }.} }$$ In other
words, $(X^{\infty };X_1^{\infty },X_2^{\infty })$ is a classical
excisive triad and the Mayer-Vietoris property for reduced
cohomology asserts, in particular, that the following
$$\xymatrix{
\widetilde H^*(X^{\infty }) \ar[r] & \widetilde H^*(X_1^{\infty
})\bigoplus \widetilde H^*(X_2^{\infty }) \ar[r]  & \widetilde
H^*(A^{\infty }) }$$ is exact. This, by Theorem
\ref{igualdadcohomologia}, is equivalent to saying that
$$\xymatrix{
H^*_\exter(X) \ar[r]  &
H^*_\exter(X_1)\times
_{H^*_\exter(A)}H^*_\exter(X_2) }$$
is surjective.
\end{proof}

We are now able to prove the main results of the paper.

\begin{proof}[Proof of Theorem \ref{intro}]
Lemmas \ref{lema1} and \ref{lema2} above show that the pair \break
$(\mathbf{Ho CW}^{\mathbb{R}_+},\mathbf{Ho CW}^{\mathbb{R}_+}_f)$
and the functors $H_\exter^n$, $n\ge 0$, are respectively, a  {\em
``homotopy category''} and {\em ``homotopy functors''} in the
sense of \cite[\S2]{Br}. In our setting, the first part of Theorem 2.8 of
\cite{Br}  translates to the fact that, for each
$n\geq 0$, there exists an exterior CW-complex $K_n\in \mathbf{Ho
CW}^{\mathbb{R}_+}$ and a universal element $u\in
H_\exter^n(K_n)$, such that the natural natural transformation
$$T_u:[-,K_n]^{\mathbb{R}_+}\longrightarrow H_\exter^n(-),\qquad T_u[f]=H_\exter^n(f)(u),$$
induces a bijection $ [X,K_n]^{\mathbb{R}_+}\cong H_\exter^*(X) $
for any exterior finite CW-complex $X\in \mathbf{Ho
CW}^{\mathbb{R}_+}_f$. To prove, also applying directly Theorem
2.8 of \cite{Br}, that $K_n$ is unique up to based exterior
homotopy, we see first that $K_n$ is e-path connected. On the one
hand, for the classical path-connectivity of $K_n$, consider the
following general situation: let $S^n_0$ denote the base ray
$\R_+$ with an $n$-sphere attached at $0$, i.e, the pushout of
$\R_+\leftarrow \{0\}\rightarrow S^n$. If $\theta$ denotes the
obvious base ray of $S^n_0$ and $(X,\alpha)$ is any based exterior
space, then there is an isomorphism,
$$
[(S^n_0,\theta ),(X,\alpha )]^{\mathbb{R}_+}=\pi_n\bigl(X,\alpha(0)\bigr).
$$
Therefore
$$\pi _0(K_n)\cong [S^0_0,K_n]^{\mathbb{R}_+}\cong H^n_{\exter }(S^0_0)\cong
\widetilde{H}^n((S^0_0)^+)\cong 0.$$
On the other hand,
 $$
\pi _0^{\mathcal{B}}(K_n)\cong [\SB^0,K_n]^{\mathbb{R}_+}\cong
H_\exter^n(\SB^0)\cong
{\widetilde H^n\bigl(({\SB^0})^+\bigr)}\cong 0.
$$

We finish by remarking that the subcategory of $\mathbf{Ho
CW}^{\mathbb{R}_+}$ consisting of\break e-path connected, exterior
CW-complexes  is compactly generated by\break $\mathbf{Ho
CW}^{\mathbb{R}_+}_f$. That is, a map $f\colon X\to Y$ between
e-path connected exterior CW-complexes is an exterior based
homotopy equivalence if and only if $f_*\colon [Z,X]^{\R_+}\to
[Z,Y]^{\R_+}$ is a bijection for every  $Z\in
\mathbf{CW}^{\mathbb{R}_+}_f$. In fact, by the {\em exterior
Whitehead theorem} \cite[Thm. 12]{G-G-M2}, as well as its
classical version, e-path connected, based, exterior CW complexes
are compactly generated simply by the finite based exterior
CW-complexes $S_\eta^n$ and $S^n_0$, $n\ge 1$. For it, take into
account that
$$[(S^n_\eta,\xi), (X,\alpha)]^{\R_+} \cong [(\SN^n,\eta),(X,\alpha)]^\N=\piB_n(X,\alpha)$$
 and  the above isomorphism,
$$
[(S^n_0,\theta ),(X,\alpha )]^{\mathbb{R}_+}=\pi_n\bigl(X,\alpha(0)\bigr)
$$
for any based exterior space $(X,\alpha)$. This completes the
proof.
\end{proof}

In the special case of considering classical cohomology with compact supports we obtain:

\begin{proof}[Proof of Theorem \ref{intro2}]
Simply note that any countable, locally finite, finite dimensional
relative CW-complex $(X,\R_+)$ is an exterior finite based
CW-complex endowed with the cocompact externology, see
\cite{garcalpari}, \cite[\S2.1]{G-G-M2} or \cite[\S5.B]{Ext_1}).
\end{proof}

We finish by characterizing, as in the classical case, the
classifying space  $K_n$. For it recall that, given an integer
$m\ge1$ and a group $G$ (abelian if $m\ge 2$), we will denote by
$K_B(G,m)$ the {\em Eilenberg-MacLane exterior space $K_B(G,m)$
for Brown-Grossman homotopy groups} \cite{exherri}, whose homotopy
type in $\mathbf{ CW}^{\mathbb{R}_+}$ is unique  due to the
exterior Whitehead theorem \cite[Thm. 12]{G-G-M2}. In what
follows, $H^*$ will denote either singular, \v{C}ech,
Alexander-Spanier, or any other isomorphic cohomology theory on
HLC spaces. In fact, we only need $H^*_\exter$ to be isomorphic to
any of the above on $S^m_\eta$, for any $m\ge 1$. In this context
we are also allowed to consider a coefficient ring $R$ which is
fixed henceforth. $R^\infty $ will denote the product of countably
infinitely many copies of $R.$

\begin{theorem}\label{descripcion}  For any $n\ge 1$,
the classifying space $K_n$ is exterior homotopy equivalent to $K_B(R^\infty,n)$.
\end{theorem}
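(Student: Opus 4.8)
The plan is to pin down $K_n$ by computing its two families of exterior homotopy invariants and then appealing to the uniqueness of exterior Eilenberg--MacLane spaces. By the exterior Whitehead theorem \cite[Thm. 12]{G-G-M2}, together with the fact (established in the proof of Theorem \ref{intro}) that e-path connected based exterior CW-complexes are compactly generated by the finite models $S^m_\eta$ and $S^m_0$, two such complexes become exterior homotopy equivalent as soon as a map between them induces isomorphisms on all classical homotopy groups $\pi_m(-)=[S^m_0,-]^{\R_+}$ and on all Brown--Grossman groups $\piB_m(-)=[S^m_\eta,-]^{\R_+}$. So it suffices to show that $K_n$ carries exactly the homotopy groups of an exterior Eilenberg--MacLane space of Brown--Grossman type $(R^\infty,n)$, and then to invoke the uniqueness of $K_B(R^\infty,n)$ up to based exterior homotopy.

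First I would carry out the computation. Combining the representability bijection of Theorem \ref{intro}, $[-,K_n]^{\R_+}\cong H_\exter^n(-)$ (applicable since $S^m_0$ and $S^m_\eta$ are finite), with the identification $H^*_\exter(X)\cong \widetilde H^*(X^\infty)$ of Theorem \ref{igualdadcohomologia}, the homotopy groups of $K_n$ turn into reduced cohomology of Alexandroff constructions of the test spheres:
$$\pi_m(K_n)\cong H_\exter^n(S^m_0)\cong \widetilde H^n\bigl((S^m_0)^\infty\bigr),\qquad \piB_m(K_n)\cong H_\exter^n(S^m_\eta)\cong \widetilde H^n\bigl((S^m_\eta)^\infty\bigr).$$
Here $(S^m_0)^\infty$ is the one-point compactification of a ray carrying a single $m$-sphere; collapsing the contractible compactified ray shows it is pointed homotopy equivalent to $S^m$, so for $m\ge 1$ one reads off $\pi_m(K_n)\cong R$ when $m=n$ and $0$ otherwise, recovering the classical $K(R,n)$ behaviour. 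On the other hand $(S^m_\eta)^\infty$ is the one-point compactification of the ray carrying an $m$-sphere at each integer, i.e. the wedge of countably many $m$-spheres that accumulate at $\infty$; evaluating its reduced cohomology in the continuous theory $H^*$ in degree $n$ yields the coefficient group $R^\infty$ of $K_B(R^\infty,n)$ for $m=n$ and $0$ otherwise. Thus $K_n$ has $\pi_m(K_n)\cong R\,\delta_{mn}$ and $\piB_m(K_n)\cong R^\infty\,\delta_{mn}$, precisely the homotopy groups of $K_B(R^\infty,n)$.

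Once the groups are matched, the conclusion $K_n\simeq K_B(R^\infty,n)$ follows from the uniqueness of the exterior Eilenberg--MacLane space, which itself rests on the exterior Whitehead theorem. The hard part will not be the bookkeeping of the cohomology groups but the passage from an abstract isomorphism of homotopy groups to an honest exterior homotopy equivalence: one must exhibit a comparison map $K_B(R^\infty,n)\to K_n$ realizing these isomorphisms. Such a map arises from representability, as the map classified by a fundamental class $v\in H_\exter^n\bigl(K_B(R^\infty,n)\bigr)$; the crux is therefore to (a) produce and recognize this fundamental class as universal, presumably through an exterior Hurewicz/universal-coefficient argument for $K_B(R^\infty,n)$; (b) verify carefully that the continuous cohomology of the accumulating wedge $(S^n_\eta)^\infty$ is exactly the coefficient group $R^\infty$, this being the genuinely delicate sphere-at-infinity computation on which the whole identification hinges; and (c) control the customary $\varprojlim{}^1$ subtleties when transporting the representability bijection from finite complexes to the infinite CW-complexes $K_n$ and $K_B(R^\infty,n)$, for which the weak-colimit machinery of Lemma \ref{fundamentalcategory}(iii) supplies the needed surjectivity and bijectivity statements.
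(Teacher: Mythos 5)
Your overall strategy coincides with the paper's: compute the exterior homotopy invariants of $K_n$ on the test objects $S^m_\eta$ (and $S^m_0$) via the representability bijection of Theorem \ref{intro} together with Theorem \ref{igualdadcohomologia}, and then invoke the uniqueness of the exterior Eilenberg--MacLane space. However, you omit the one step that constitutes essentially the whole of the paper's proof. The representability bijection $\piB_m(K_n)=[S^m_\eta,K_n]^{\R_+}\cong H^n_\exter(S^m_\eta)$ is, a priori, only a bijection of \emph{sets}: the left-hand side carries the Brown--Grossman group structure coming from the co-$H$ structure of $S^m_\eta$, the right-hand side carries the cohomology group structure, and nothing in Brown representability guarantees that these agree. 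To ``read off'' $\piB_m(K_n)\cong R^\infty$ as groups one must prove that $H^n_\exter(f+g)=H^n_\exter(f)+H^n_\exter(g)$ for exterior maps $f,g\colon S^m_\eta\to X$; the paper does exactly this, translating the identity via representability into the distributivity $h(f+g)=hf+hg$ for every $h\in[X,K_n]^{\R_+}$, which it then deduces from the naturality of the isomorphism $w_X\colon\piB_m(X)\cong\pi_m(X^\N)$. Without this additivity your computation only identifies the underlying set of $\piB_m(K_n)$, and the concluding appeal to the exterior Whitehead theorem cannot get off the ground.

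By contrast, the three difficulties you single out are either not where the work lies or are handled more cheaply in the paper. Point (c) is moot: the test spaces $S^m_\eta$ and $S^m_0$ are \emph{finite} exterior CW-complexes, and Theorem \ref{intro} already asserts the bijection for arbitrary exterior CW-complexes, so no ${\varprojlim}^1$ analysis is required here. Point (a) is bypassed because $K_B(R^\infty,n)$ is characterized, uniquely up to based exterior homotopy by the exterior Whitehead theorem, by its homotopy groups; once the groups of $K_n$ are matched (as groups!) there is nothing further to construct. Your additional computation $\pi_m(K_n)\cong R\,\delta_{mn}$ via $(S^m_0)^\infty\simeq S^m$ is correct and harmless, though the paper does not need it explicitly. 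As for (b), you are right that the value of $H^n_\exter(S^n_\eta)\cong H^n_c(S^n_\eta)$ deserves scrutiny: it is the colimit $\varinjlim_k H^n(S^n_\eta,E_k)$ of the groups $R^{k+1}$ along the evident inclusions, so one should verify with care whether the result is the product $R^\infty$ claimed in the statement or the corresponding direct sum; but in any case this is a fixed computation, not the structural crux, which remains the additivity argument above.
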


\begin{proof}
In view of Theorem \ref{intro}, we have a bijection,
$$
 \piB_m(K_n,\alpha)=[(S^m_\eta,\xi), K_n]^{\R_+}\cong H_\exter^n(S^m_\eta)\cong H_c^n(S^m_\eta)\cong\begin{cases}R^\infty,&{n=m;}\\0, &n\not=m.\end{cases}
$$
To prove that this is in fact an isomorphism it is enough to check
that exterior cohomology respects the addition in
$\piB_m(K_n,\alpha)$, that is,
$H_\exter^n(f+g)=H_\exter^n(f)+H_\exter^n(g)$ for any $f,g\in
\piB_n(K_n,\alpha)$.  More generally, for every based exterior
CW-complex $X$ and any par of exterior maps $f,g\colon S^m_\eta\to
X$ representing elements of $\piB_m( X)$ we show that
$$
H^n_\exter(f+g)=H^n_\exter(f)+H^n_\exter(g)\colon H^n_\exter(X)\longrightarrow H^n_\exter(S^m_\eta).
$$
Here, abusing of notation, we do not distinguish maps from the
homotopy classes which they represent. Due to the representability
of the exterior cohomology, the above is equivalent to seeing that
$$
(f+g)^*=f^*+g^*\colon [X,K_n]^{\R_+}\longrightarrow [S^m_\eta,K_n]^{\R_+},\qquad
$$
where $(-)^*$ denotes composition on the right.

For it, consider the isomorphism
$$
w_X\colon \piB_m(X)\cong[S^m_\eta,X]^{\R_+}\cong[S^m,X^{\N}]=\pi_m(X^\N)
$$
 and observe that its naturality provides, for every $h\in[X,K_n]^{\R_+}$, a commutative square
 $$
\xymatrix{
  \piB_m(X) \ar[d]_{w_X}^\cong \ar[r]^{h_*} & \piB_m(X) \ar[d]^{w_{K_n}}_\cong \\
  \pi_m(X^\N)\ar[r]_{(h^\N)_*} & \pi_m(K_n^\N). }
$$
Here $(-)_*$ denotes composition on the left. Therefore
$$
w_{K_n}\bigl(h(f+g)\bigr)=w_{K_n}h_*(f+g)=(h^\N)_*w_X(f+g)=w_{K_n}(hf+hg).
$$
\noindent and $h(f+g)=hf+hg,$ as required.
\end{proof}

\bigskip
\bigskip\bigskip
\noindent{\sc J. M. Garc\'{\i}a-Calcines, P. R. Garc\'{\i}a-D\'{\i}az\hfill\break Departamento de Matem\'atica
Fundamental, Universidad de La Laguna, 38271 La Laguna, Spain.}\hfill\break
\texttt{jmgarcal@ull.es, prgdiaz@ull.es}}
\hfill\break

\medskip
\noindent {\sc A. Murillo \hfill\break Departamento de \'Algebra, Geometr\'{\i}a y Topolog\'{\i}a, Universidad de M\'alaga, Ap.\ 59, 29080 M\'alaga, Spain}.\hfill\break 
\texttt{aniceto@agt.cie.uma.es}

\end{document}